\DeclareFontFamily{OT2}{cmr}{\hyphenchar\font45 }
\DeclareFontShape{OT2}{cmr}{m}{n}{%
   <5><6><7><8><9>gen*wncyr%
   <10><10.95><12><14.4><17.28><20.74><24.88>wncyr10}{}
\DeclareMathAlphabet{\mathcyr}{OT2}{cmr}{m}{n}
\DeclareMathAlphabet{\mathcyb}{OT2}{cmr}{b}{n}
\SetMathAlphabet{\mathcyr}{bold}{OT2}{cmr}{b}{n}
\newtheorem{thm}{Theorem}[section]
\newtheorem{lem}[thm]{Lemma}
\newtheorem{prop}[thm]{Proposition}
\theoremstyle{definition}
\theoremstyle{remark}
\newtheorem{rem}[thm]{Remark}
\begin{document}

\title[{Connectors of the Ohno relation for PMZSs}]{Connectors of the Ohno relation for parametrized multiple zeta series}

\author{Hideki Murahara}
\address[Hideki Murahara]{Nakamura Gakuen University Graduate School, 5-7-1, Befu, Jonan-ku,
Fukuoka, 814-0198, Japan}
\email{hmurahara@nakamura-u.ac.jp}

\author{Tomokazu Onozuka}
\address[Tomokazu Onozuka]{Institute of Mathematics for Industry, Kyushu University 744, Motooka, Nishi-ku,
Fukuoka, 819-0395, Japan}
\email{t-onozuka@math.kyushu-u.ac.jp}

\subjclass[2010]{Primary 11M32}
\keywords{Multiple zeta values, Parametrized multiple zeta series, Duality relation, Ohno relation, Connector}

\begin{abstract}
 The Ohno relation is a well known relation in the theory of multiple zeta values. 
 Recently, Seki and Yamamoto introduced a connector method and gave its succinct proof. 
 On the other hand, Igarashi obtained the generalization of the Ohno relation for parametrized multiple zeta series.  
 In this paper, we give new connectors to simplify his proof.  
\end{abstract}

\maketitle

\section{Introduction}
Inspired by Seki and Yamamoto's recent works (see \cite{SY19}, \cite{SY20}, \cite{Sek20}, and \cite{Yam20}), we found new connectors of the Ohno relation for parametrized multiple zeta series (PMZSs), which was obtained by Igarashi in \cite{Iga12}. 
For positive integers $k_1,\ldots,k_r$ with $k_r\ge2$, the multiple zeta values (MZVs) are defined by
\begin{align*}
 \zeta(k_{1},\dots,k_{r}):=\sum_{1\le m_{1}<\cdots<m_{r}}\frac{1}{m_{1}^{k_{1}}\cdots m_{r}^{k_{r}}}.
\end{align*}

Let $m_0=n_{0}=0$, and $r$ and $s$ be positive integers. 
In \cite{SY19}, Seki and Yamamoto used the equalities
\begin{align}
  \label{PC}
  \frac{1}{m_r} \cdot \frac{m_{r}! n_{s-1}!}{(m_{r}+n_{s-1})!} 
  &=\sum_{n_{s-1}<n_s} \frac{1}{n_s} \cdot \frac{m_r! n_s!}{(m_r+n_s)!}, \\ 
  \label{CP}
  \sum_{m_{r-1}<m_r} \frac{1}{m_r} \cdot \frac{m_r! n_s!}{(m_r+n_s)!} 
  &=\frac{1}{n_s} \cdot \frac{m_{r-1}! n_s!}{(m_{r-1}+n_s)!} 
\end{align}
to prove the duality relation for MZVs. 
For example, we have
\begin{align*}
 \zeta(1,2) 
 &=\sum_{0<m_1<m_2} \frac{1}{m_1 m_2^2} \\
 &=\sum_{0<m_1<m_2} \sum_{0<n_1} \frac{1}{m_1 m_2} \cdot \frac{1}{n_1} \cdot \frac{m_2! n_1!}{(m_2+n_1)!}
  \qquad (\textrm{by } \eqref{PC} \textrm{ for } r=2,s=1) \\ 
 &=\sum_{0<m_1} \sum_{0<n_1} \frac{1}{m_1} \cdot \frac{1}{n_1^2} \cdot \frac{m_1! n_1!}{(m_1+n_1)!} 
  \qquad (\textrm{by } \eqref{CP} \textrm{ for } r=2,s=1) \\ 
 &=\sum_{0<n_1} \frac{1}{n_1^3} 
  \qquad (\textrm{by } \eqref{CP} \textrm{ for } r=1,s=1) \\ 
 &=\zeta(3).
\end{align*}
Note that the sum 
\[ 
 \sum_{0<m_1<\cdots<m_r} \sum_{0<n_1<\cdots<n_s} \frac{1}{m_1^{k_1} \cdots m_r^{k_r}} \cdot \frac{1}{n_1^{l_1} \cdots n_s^{l_s}} \cdot \frac{m_r! n_s!}{(m_r+n_s)!}
\] 
and 
\[ 
 \frac{m_r! n_s!}{(m_r+n_s)!}
\]
are called the connected sum and the connector of the duality relation, respectively. 

For positive integers $k_1,\ldots,k_r$ with $k_r\ge2$ and a complex number $\alpha$ with $\Re\alpha>0$, the PMZSs are defined by
\begin{align*}
 \zeta(k_{1},\dots,k_{r};\alpha)
 :=\sum_{0\le m_{1}<\cdots<m_{r}} 
  \frac{ (\alpha)_{m_1} }{ m_1! } \cdot
  \frac{ m_r! }{ (\alpha)_{m_r} } \cdot 
  \frac{1}{(m_{1}+\alpha)^{k_{1}}\cdots (m_{r}+\alpha)^{k_{r}}},
\end{align*}
where 
\begin{align*}
 (\alpha)_{m}:=
 \begin{cases}
  \alpha(\alpha+1)\cdots(\alpha+m-1) 
   &\textrm{if } m\in\mathbb{Z}_{\ge1}, \\
  1
   &\textrm{if } m=0.
 \end{cases}
\end{align*}
Note that this series is a Hurwitz type generalization of MZVs, which is know to satisfy the  cyclic sum  formula \cite{Iga11} and the Ohno relation \cite{Iga12}.

We call a tuple of positive integers $(k_1,\dots,k_r)$ with $k_r\ge2$ admissible index. 
For a positive integer $a$ and a non-negative integer $m$, let $(\{a\}^m):=(\underbrace{a,\dots,a}_{m})$.
If we write an admissible index as
 \[
  (\{1\}^{a_1-1},b_1+1,\dots,\{1\}^{a_t-1},b_t+1) \quad (a_p, b_q\ge1),
 \]
we define its dual index as 
 \[
  (\{1\}^{b_t-1},a_t+1,\dots,\{1\}^{b_1-1},a_1+1).
 \]
Then the duality relation of MZVs is described as 
\[
 \zeta (k_1,\dots,k_{r})
 =\zeta (k'_1,\dots,k'_{r'}), 
\]
where $(k'_1,\dots,k'_{r'})$ is the dual index of an admissible index $(k_1,\dots,k_r)$. 
Similarly, the duality relation of PMZSs can be written as 
\[
 \zeta (k_1,\dots,k_{r};\alpha)
 =\zeta (k'_1,\dots,k'_{r'};\alpha), 
\]
which is a special case of the following theorem. 
\begin{thm}[Ohno relation for PMZSs; Igarashi \cite{Iga12}] \label{main}
 Let $(k_1,\dots,k_r)$ be an admissible index and $(k'_1,\dots,k'_{r'})$ be its dual index. 
  For a non-negative integer $m$ and a complex number $\alpha$ with $\Re\alpha>0$, we have
 \begin{align*}
  \sum_{\substack{ e_1+\cdots+e_r=m \\ e_1,\dots,e_r\ge0 }} 
  \zeta(k_1+e_1,\dots,k_r+e_r;\alpha)
  =\sum_{\substack{ e'_1+\cdots+e'_{r'}=m \\ e'_1,\dots,e'_{r'}\ge0 }} 
  \zeta(k'_1+e'_1,\dots,k'_{r'}+e'_{r'};\alpha).
 \end{align*}
\end{thm}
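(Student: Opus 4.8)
The plan is to recast Theorem~\ref{main} as an equality of generating functions and then prove that equality by the connector method of Seki and Yamamoto. Summing the asserted identity against $x^m$ over all $m\ge0$ replaces the constrained sums $\sum_{e_1+\cdots+e_r=m}$ by independent geometric series in each slot: the term of $\zeta(k_1+e_1,\dots,k_r+e_r;\alpha)$ carries the factor $(m_i+\alpha)^{-(k_i+e_i)}$, and
\[
\sum_{e\ge0}\frac{x^{e}}{(m_i+\alpha)^{k_i+e}}=\frac{1}{(m_i+\alpha)^{k_i-1}\,(m_i+\alpha-x)}.
\]
Hence, coefficientwise in $x$, Theorem~\ref{main} is equivalent to the equality of the two series obtained from $\zeta(k_1,\dots,k_r;\alpha)$ and $\zeta(k'_1,\dots,k'_{r'};\alpha)$ after replacing each factor $(m_i+\alpha)^{-k_i}$ by $(m_i+\alpha)^{-(k_i-1)}(m_i+\alpha-x)^{-1}$. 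I would henceforth work with these ``Ohno-deformed'' series, so that the Ohno relation becomes a duality-type statement for them.

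Next I would set up a connected sum. Indexing the primal slots by $0\le m_1<\cdots<m_r$ and the dual slots by $0\le n_1<\cdots<n_s$ (with $s=r'$), I would form a double series carrying the deformed factors on both sides, glued by a connector $C(m_r,n_s;\alpha,x)$. The connector has to be assembled from the Pochhammer weights $(\alpha)_\bullet/\bullet!$ already present in the PMZS and from the duality kernel $m!\,n!/(m+n)!$, deformed by $x$ so as to be compatible with the factor $(m+\alpha-x)^{-1}$. Its two defining requirements are the boundary evaluations: when the $n$-side is empty the connected sum should collapse to the deformed primal series, and when the $m$-side is empty it should collapse to the deformed dual series.

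The engine of the proof is a pair of transport relations generalizing \eqref{PC} and \eqref{CP}: one that creates a new largest $n$-slot of exponent $1$ while consuming an exponent-$1$ $m$-slot, and one that transfers the summation over the largest $m$-slot onto an increment of the last $n$-slot. Following the combinatorial bookkeeping encoded in the dual index, I would iterate these two relations to peel off the primal slots one at a time and build up the dual slots, the two relations between them realizing exactly the rewriting $(\{1\}^{a_1-1},b_1+1,\dots,\{1\}^{a_t-1},b_t+1)\mapsto(\{1\}^{b_t-1},a_t+1,\dots,\{1\}^{b_1-1},a_1+1)$. Since no step alters the total $x$-weight, comparing coefficients of $x^m$ at the end yields the theorem.

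The main obstacle is the construction and verification of the connector together with its two transport relations. Unlike the pure duality case, the connector here must interact correctly \emph{both} with the Hurwitz shift $\alpha$ (through the ratios $(\alpha)_{m_1}/m_1!$ and $m_r!/(\alpha)_{m_r}$) \emph{and} with the Ohno deformation (through $(m+\alpha-x)^{-1}$). Each transport relation should reduce to a single hypergeometric/telescoping summation in one index, and the delicate point is to choose $C$ so that both identities and both boundary evaluations hold simultaneously; this is precisely where the ``new connectors'' of the title must be found.
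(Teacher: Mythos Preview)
Your outline is exactly the paper's approach: the generating-function reformulation, the connected sum, and the two transport relations peeling off primal slots and building dual ones are precisely the $Z(k_1,\dots,k_r;l_1,\dots,l_s;\alpha;x)$ framework and Lemmas~\ref{lem1} and~\ref{lem2}. You also correctly anticipate that one transport identity is a hypergeometric evaluation and the other a telescope.

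The gap is that you stop at the point you yourself flag as the main obstacle: you never write down the connector, so there is nothing to verify. The paper's connector is
\[
C(m_r,n_s;\alpha,x)=\frac{\Gamma(\alpha)}{\Gamma(\alpha-x)}\cdot\frac{(\alpha)_{m_r}}{m_r!}\cdot\frac{(\alpha)_{n_s}}{n_s!}\cdot\frac{\Gamma(m_r+\alpha-x+1)\,\Gamma(n_s+\alpha-x+1)}{\Gamma(m_r+n_s+2\alpha-x+1)},
\]
and the structural point you are missing is that the boundary step and the interior steps use \emph{different} connectors (equivalently, different transport relations). The interior relation (Lemma~\ref{lem2}) is the pure telescope
\[
\frac{1}{n_s+\alpha-x}\cdot\frac{\Gamma(m_r+\alpha-x+1)\Gamma(n_s+\alpha-x+1)}{\Gamma(m_r+n_s+2\alpha-x+1)}
=\frac{1}{m_r+\alpha}\Bigl(\tfrac{\Gamma(m_r+\alpha-x+1)\Gamma(n_s+\alpha-x)}{\Gamma(m_r+n_s+2\alpha-x)}-\tfrac{\Gamma(m_r+\alpha-x+1)\Gamma(n_s+\alpha-x+1)}{\Gamma(m_r+n_s+2\alpha-x+1)}\Bigr),
\]
which does \emph{not} involve $(\alpha)_n/n!$ at all; the Pochhammer weights $(\alpha)_{m_1}/m_1!$ and $m_r!/(\alpha)_{m_r}$ from the PMZS are absorbed only at the first and last steps, where the summation over the full range $n_s\ge0$ (resp.\ $m_r\ge0$) is evaluated by Gauss's ${}_2F_1(\alpha-x,\alpha,m+2\alpha-x+1;1)$. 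Your proposal implicitly assumes a single connector handles all steps; recognizing that the boundary requires a genuinely different (hypergeometric) identity is the missing idea.
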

\begin{rem}
 This is a generalization of the result obtained by Ohno in \cite{Ohn99}. 
 Seki and Yamamoto found the connector of his result i.e., the Ohno relation for MZVs in \cite{SY20}.  
\end{rem}

Now we reset $m_0:=-1$ and $n_0:=-1$. 
In this paper, we give an alternative proof of Theorem \ref{main} by using the following relations (for the proofs, see next section):
\begin{align}
 \begin{split} \label{PCO1}
  &\frac{1}{ m_r+\alpha } 
  \\
  &=\frac{\Gamma(\alpha)}{\Gamma(\alpha-x)}\sum_{0\le n_s} 
    \frac{ 1}{ n_s+\alpha-x } \cdot 
    \frac{ (\alpha)_{m_r} }{m_r! } \cdot
   \frac{ (\alpha)_{n_s} }{ n_s! } \cdot
   \frac{\Gamma(m_r+\alpha-x+1) \Gamma(n_s+\alpha-x+1)}{\Gamma(m_r+n_s+2\alpha-x+1)},
 \end{split} \\
 \begin{split} \label{CPO1}
  &\frac{\Gamma(\alpha)}{\Gamma(\alpha-x)}\sum_{0\le m_r} 
   \frac{ 1 }{ m_r+\alpha-x } \cdot 
   \frac{ (\alpha)_{m_r} }{ m_r! } \cdot 
   \frac{ (\alpha)_{n_s} }{ n_s! }\cdot 
   \frac{\Gamma(m_r+\alpha-x+1) \Gamma(n_s+\alpha-x+1)}{\Gamma(m_r+n_s+2\alpha-x+1)} \\
  &=\frac{1}{ n_s+\alpha }    
 \end{split}
\end{align}
and
\begin{align}
  \label{PCO2}
  \begin{split}
  &\frac{1}{m_r+\alpha} \cdot 
  \frac{\Gamma(m_{r}+\alpha-x+1) \Gamma(n_{s-1}+\alpha-x+1) }{ \Gamma(m_{r}+n_{s-1}+2\alpha-x+1) } \\
  &=\sum_{n_{s-1}<n_s} \frac{1}{n_s+\alpha-x} \cdot 
   \frac{ \Gamma(m_r+\alpha-x+1) \Gamma(n_s+\alpha-x+1) }{ \Gamma(m_r+n_s+2\alpha-x+1) }, 
  \end{split} \\ 
  \label{CPO2}
  \begin{split}
  &\sum_{m_{r-1}<m_r} 
   \frac{1}{m_r+\alpha-x} \cdot 
   \frac{ \Gamma(m_r+\alpha-x+1) \Gamma(n_s+\alpha-x+1) }{ \Gamma(m_r+n_s+2\alpha-x+1) } \\
  &=\frac{1}{n_s+\alpha} \cdot 
   \frac{ \Gamma(m_{r-1}+\alpha-x+1) \Gamma(n_s+\alpha-x+1) }{ \Gamma(m_{r-1}+n_s+2\alpha-x+1) },
  \end{split}
\end{align}
where $x$ is a formal parameter. 
These two types of relations are analogues of \eqref{PC} and \eqref{CP}, 
 and we apply these relations repeatedly to prove Theorem \ref{main}.
More precisely, we use `\eqref{PCO1} and \eqref{CPO1}' in the first and last steps, and `\eqref{PCO2} and \eqref{CPO2}' in the middle steps.
Note that our connectors are
\[
 \frac{\Gamma(\alpha)}{\Gamma(\alpha-x)} \cdot    
 \frac{ (\alpha)_{m_r} }{ m_r! } \cdot 
 \frac{ (\alpha)_{n_s} }{ n_s! }\cdot 
 \frac{\Gamma(m_r+\alpha-x+1) \Gamma(n_s+\alpha-x+1)}{\Gamma(m_r+n_s+2\alpha-x+1)}
\] 
and
\[
 \frac{ \Gamma(m_r+\alpha-x+1) \Gamma(n_s+\alpha-x+1) }{ \Gamma(m_r+n_s+2\alpha-x+1) } , 
\]
respectively. 
In general, finding a connector might be difficult, but once it is obtained, proofs often become easier.
Especially, to prove the duality relation for PMZSs, we use the following relations: 
\begin{align}
  \label{PC1}
  \frac{1}{m_r+\alpha} 
  =\sum_{0\le n_s} 
   \frac{ 1}{n_s+\alpha} \cdot 
   \frac{ (\alpha)_{m_r} }{ m_r! } \cdot
   \frac{ (\alpha)_{n_s} }{ n_s! } \cdot 
   \frac{ \Gamma(m_r+\alpha+1) \Gamma(n_s+\alpha+1)}{ \Gamma(m_r+n_s+2\alpha+1) }, \\
  \label{CP1}
  \sum_{0\le m_r} 
   \frac{ 1}{m_r+\alpha} \cdot 
   \frac{ (\alpha)_{m_r} }{ m_r! } \cdot
   \frac{ (\alpha)_{n_s} }{ n_s! } \cdot 
   \frac{ \Gamma(m_r+\alpha+1) \Gamma(n_s+\alpha+1)}{ \Gamma(m_r+n_s+2\alpha+1) } 
  =\frac{1}{n_s+\alpha} 
\end{align}
and
\begin{align}
  \label{PC2}
  \begin{split}
  &\frac{1}{m_r+\alpha} \cdot 
  \frac{ \Gamma(m_{r}+\alpha+1) \Gamma(n_{s-1}+\alpha+1) }{ \Gamma(m_{r}+n_{s-1}+2\alpha+1) } \\
  &=\sum_{n_{s-1}<n_s} 
  \frac{1}{n_s+\alpha} \cdot 
  \frac{ \Gamma(m_r+\alpha+1) \Gamma(n_s+\alpha+1) }{ \Gamma(m_r+n_s+2\alpha+1) }, 
  \end{split} \\ 
  \label{CP2}
  \begin{split}
  &\sum_{m_{r-1}<m_r} 
   \frac{1}{m_r+\alpha} \cdot 
   \frac{ \Gamma(m_r+\alpha+1) \Gamma(n_s+\alpha+1) }{ (m_r+n_s+2\alpha+1) } \\
  &=\frac{1}{n_s+\alpha} \cdot 
   \frac{ \Gamma(m_{r-1}+\alpha+1) \Gamma(n_s+\alpha+1) }{ \Gamma(m_{r-1}+n_s+2\alpha+1) }. 
  \end{split}
\end{align}
%
For example, we have
\begin{align*}
 &\zeta(2,3;\alpha) \\
 &=\sum_{0\le m_1<m_2} 
  \frac{ (\alpha)_{m_1} }{ m_1! } \cdot
  \frac{ m_2! }{ (\alpha)_{m_2} } \cdot
  \frac{1}{(m_1+\alpha)^2 (m_2+\alpha)^3} \\
 &=\sum_{0\le m_1<m_2} 
  \sum_{0\le n_1} 
  \frac{ (\alpha)_{m_1} }{ m_1! } \cdot
  \frac{ (\alpha)_{n_1} }{ n_1! } \cdot 
  \frac{1}{(m_1+\alpha)^2 (m_2+\alpha)^2} \cdot 
  \frac{1}{n_1+\alpha} \\ 
  &\qquad\qquad\qquad\qquad\qquad 
   \cdot \frac{ \Gamma(m_2+\alpha+1) \Gamma(n_1+\alpha+1)}{ \Gamma(m_2+n_1+2\alpha+1) } 
   \qquad
   (\textrm{by } \eqref{PC1} \textrm{ for } r=2,s=1) \\ 
 &=\sum_{0\le m_1<m_2} 
  \sum_{0\le n_1<n_2} 
  \frac{ (\alpha)_{m_1} }{ m_1! } \cdot
  \frac{ (\alpha)_{n_1} }{ n_1! } \cdot 
  \frac{1}{ (m_1+\alpha)^2 (m_2+\alpha) } \cdot 
  \frac{1}{ (n_1+\alpha) (n_2+\alpha) } \\
  &\qquad\qquad\qquad\qquad\qquad 
   \cdot \frac{ \Gamma(m_2+\alpha+1) \Gamma(n_2+\alpha+1)}{ \Gamma(m_2+n_2+2\alpha+1) } 
   \qquad
   (\textrm{by } \eqref{PC2} \textrm{ for } r=2,s=2) \\ 
 &=\sum_{0\le m_1} 
  \sum_{0\le n_1<n_2} 
  \frac{ (\alpha)_{m_1} }{ m_1! } \cdot
  \frac{ (\alpha)_{n_1} }{ n_1! } \cdot 
  \frac{1}{ (m_1+\alpha)^2 } \cdot 
  \frac{1}{ (n_1+\alpha) (n_2+\alpha)^2 } \\
  &\qquad\qquad\qquad\qquad\qquad 
   \cdot \frac{ \Gamma(m_1+\alpha+1) \Gamma(n_2+\alpha+1)}{ \Gamma(m_1+n_2+2\alpha+1) } 
   \qquad
   (\textrm{by } \eqref{CP2} \textrm{ for } r=2,s=2) \\ 
 &=\sum_{0\le m_1} 
  \sum_{0\le n_1<n_2<n_3} 
  \frac{ (\alpha)_{m_1} }{ m_1! } \cdot
  \frac{ (\alpha)_{n_1} }{ n_1! } \cdot 
  \frac{1}{ (m_1+\alpha) } \cdot 
  \frac{1}{ (n_1+\alpha) (n_2+\alpha)^2 (n_3+\alpha) } \\
  &\qquad\qquad\qquad\qquad\qquad 
   \cdot \frac{ \Gamma(m_1+\alpha+1) \Gamma(n_3+\alpha+1)}{ \Gamma(m_1+n_3+2\alpha+1) } 
   \qquad
   (\textrm{by } \eqref{PC2} \textrm{ for } r=1,s=3) \\ 
 &=\sum_{0\le n_1<n_2<n_3} 
  \frac{ (\alpha)_{n_1} }{ n_1! } \cdot 
  \frac{ n_3! }{ (\alpha)_{n_3} } \cdot
  \frac{1}{ (n_1+\alpha) (n_2+\alpha)^2 (n_3+\alpha)^2 } 
  \qquad 
   (\textrm{by } \eqref{CP1} \textrm{ for } r=1,s=3) \\ 
 &=\zeta(1,2,2;\alpha).
\end{align*}

As mentioned before, we used two types of relations \eqref{PCO1} to \eqref{CPO2} (or equivalently \eqref{PC1} to \eqref{CP2}) in the above equalities. However, we can use only one type of relations \eqref{PCO2} and \eqref{CPO2} (or \eqref{PC2} and \eqref{CP2}) to show the following relation:

\begin{prop}\label{main2}
 Let $(k_1,\dots,k_r)$ be an admissible index and $(k'_1,\dots,k'_{r'})$ be its dual index, and $\alpha$ be a complex number with $\Re\alpha>0$. 
 Then we have
 \[
 \widetilde\zeta(k_{1},\dots,k_{r};\alpha)= \widetilde\zeta(k'_{1},\dots,k'_{r'};\alpha),
 \]
 where
\begin{align*}
 \widetilde\zeta(k_{1},\dots,k_{r};\alpha)
 :=\sum_{0\le m_{1}<\cdots<m_{r}} 
  \frac{ \Gamma(m_r+\alpha+1) }{ \Gamma(m_r+2\alpha) }\cdot 
  \frac{1}{(m_{1}+\alpha)^{k_{1}}\cdots (m_{r}+\alpha)^{k_{r}}}.
\end{align*}
\end{prop}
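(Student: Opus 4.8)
The plan is to run the Seki--Yamamoto connected-sum argument in a way that makes the first and last steps unnecessary, by absorbing them into the very definition of $\widetilde\zeta$. For indices $\mathbf{k}=(k_1,\dots,k_r)$ and $\mathbf{l}=(l_1,\dots,l_s)$ (not necessarily admissible) I would introduce the connected sum
\[
 Z(\mathbf{k};\mathbf{l}):=\frac{1}{\Gamma(\alpha)}\sum_{\substack{0\le m_1<\cdots<m_r\\ 0\le n_1<\cdots<n_s}}\frac{1}{\prod_{i}(m_i+\alpha)^{k_i}}\cdot\frac{1}{\prod_{j}(n_j+\alpha)^{l_j}}\cdot\frac{\Gamma(m_r+\alpha+1)\Gamma(n_s+\alpha+1)}{\Gamma(m_r+n_s+2\alpha+1)},
\]
keeping the convention $m_0=n_0=-1$ for an empty side. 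The first key observation is the boundary identity: since $\Gamma(n_0+\alpha+1)=\Gamma(\alpha)$ and $\Gamma(m_r+n_0+2\alpha+1)=\Gamma(m_r+2\alpha)$ at $n_0=-1$, the connector collapses to $\Gamma(\alpha)\,\Gamma(m_r+\alpha+1)/\Gamma(m_r+2\alpha)$, whence $Z(\mathbf{k};\varnothing)=\widetilde\zeta(\mathbf{k};\alpha)$ and, symmetrically, $Z(\varnothing;\mathbf{l})=\widetilde\zeta(\mathbf{l};\alpha)$. Thus the weight built into $\widetilde\zeta$ is exactly the half-connector that \eqref{PC1} and \eqref{CP1} would otherwise have to create, so those relations drop out and everything is carried out with \eqref{PC2} and \eqref{CP2} alone.

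Next I would record the two elementary moves coming from \eqref{PC2} and \eqref{CP2} applied to the active (top) variable. Writing $1/(m_r+\alpha)^{k_r}=(m_r+\alpha)^{-(k_r-1)}(m_r+\alpha)^{-1}$ and feeding the last factor into \eqref{PC2} gives, for $k_r\ge2$,
\[
 Z(k_1,\dots,k_{r-1},k_r;\mathbf{l})=Z(k_1,\dots,k_{r-1},k_r-1;\mathbf{l},1),
\]
so one unit of the top $m$-exponent is transferred to a freshly appended $1$ at the top of the $n$-side. Dually, when the top $m$-exponent equals $1$, summing that variable away by \eqref{CP2} gives
\[
 Z(k_1,\dots,k_{r-1},1;l_1,\dots,l_s)=Z(k_1,\dots,k_{r-1};l_1,\dots,l_{s-1},l_s+1),
\]
so the trailing $1$ on the $m$-side is deleted and the top $n$-exponent is incremented. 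Both identities hold verbatim at the boundaries $m_0=n_0=-1$, which is precisely what makes the terminal steps automatic.

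The heart of the argument is the block-transport lemma: if $\mathbf{k}=(\mathbf{k}^-,\{1\}^{a-1},b+1)$ with $a,b\ge1$, then applying the first move $b$ times and then the second move $a$ times yields
\[
 Z(\mathbf{k}^-,\{1\}^{a-1},b+1;\mathbf{l})=Z(\mathbf{k}^-;\mathbf{l},\{1\}^{b-1},a+1).
\]
Indeed, the $b$ successive \eqref{PC2}-moves reduce $b+1$ to $1$ while appending $\{1\}^b$ to the $n$-side, and the following $a$ \eqref{CP2}-moves delete the reduced entry together with the $a-1$ remaining $1$'s, each time incrementing the top $n$-exponent, so that $\{1\}^b$ becomes $(\{1\}^{b-1},a+1)$. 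Starting from $Z(\mathbf{k};\varnothing)$ with the hook decomposition $\mathbf{k}=(\{1\}^{a_1-1},b_1+1,\dots,\{1\}^{a_t-1},b_t+1)$ and transporting the blocks in the order $t,t-1,\dots,1$, the $n$-side is built up as $(\{1\}^{b_t-1},a_t+1,\dots,\{1\}^{b_1-1},a_1+1)$, which is exactly the dual index $\mathbf{k}'$; an induction on $t$ makes this precise. Combining with the boundary identity then gives
\[
 \widetilde\zeta(\mathbf{k};\alpha)=Z(\mathbf{k};\varnothing)=Z(\varnothing;\mathbf{k}')=\widetilde\zeta(\mathbf{k}';\alpha).
\]

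I expect the main obstacle to be the bookkeeping in the block-transport lemma: one must check that the deterministic sequence of moves --- forced at each stage by whether the top $m$-exponent exceeds $1$ --- produces exactly the hook $(\{1\}^{b-1},a+1)$ and leaves $\mathbf{k}^-$ untouched, and that the degenerate cases ($a=1$, the $n$-side initially empty, and the $m$-side becoming empty in the final step) are all subsumed by the convention $m_0=n_0=-1$ rather than needing separate relations. A secondary point is to justify the rearrangements by absolute convergence for $\Re\alpha>0$, so that the repeated applications of \eqref{PC2} and \eqref{CP2} are legitimate term by term.
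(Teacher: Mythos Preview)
Your proposal is correct and follows essentially the same connected-sum strategy as the paper: the paper defines the same connected sum (with a formal parameter $x$), observes that the boundary cases recover $\widetilde\zeta$, and then uses Lemma~\ref{lem2} (i.e.\ \eqref{PC2}/\eqref{CP2} at $x=0$) repeatedly to transport indices from one side to the other. The only minor difference is that the paper carries the parameter $x$ throughout to obtain the slightly more general identity \eqref{sec3} before specializing to $x=0$, whereas you work at $x=0$ from the outset.
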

\begin{rem}
Generally, we can obtain \eqref{sec3}, which is a generalization of the above proposition with a formal parameter $x$.
\end{rem}
From Theorem \ref{main} and Proposition \ref{main2}, there might be another factor $F_{(m_1,\ldots,m_r)}(\alpha)$ satisfying  the duality relation if we consider the series
\begin{align*}
 \xi(k_{1},\dots,k_{r};\alpha)
 :=\sum_{0\le m_{1}<\cdots<m_{r}} 
  F_{(m_1,\ldots,m_r)}(\alpha)\cdot 
  \frac{1}{(m_{1}+\alpha)^{k_{1}}\cdots (m_{r}+\alpha)^{k_{r}}}.
\end{align*}

\section{Proofs}
\subsection{Proof of Theorem \ref{main}} 
For positive integers $k_1,\dots,k_r,l_1,\dots,l_s$ and a complex number $\alpha$ with $\Re\alpha>0$, let
\begin{align*}
 &Z(k_1,\dots,k_r; \emptyset; \alpha; x) \\
 &:=\sum_{0\le m_1<\cdots<m_r} 
  \frac{ (\alpha)_{m_1} }{ m_1! } \cdot
  \frac{ m_r! }{ (\alpha)_{m_r} } \cdot
  \frac{1}{(m_1+\alpha)^{k_1-1}(m_1+\alpha-x) \cdots (m_r+\alpha)^{k_r-1}(m_r+\alpha-x) } \\
  &\qquad\qquad\qquad\qquad\qquad\qquad\qquad\qquad\qquad\qquad\qquad\qquad\qquad\qquad\qquad\qquad\qquad(k_r\ge2),\\
 &Z(\emptyset; l_1,\dots,l_s; \alpha; x) \\
 &:=\sum_{0\le n_1<\cdots<n_s} 
  \frac{ (\alpha)_{n_1} }{ n_1! } \cdot
  \frac{ n_s! }{ (\alpha)_{n_s} } \cdot
  \frac{1}{(n_1+\alpha)^{l_1-1}(n_1+\alpha-x) \cdots (n_s+\alpha)^{l_s-1}(n_s+\alpha-x) } \\
  &\qquad\qquad\qquad\qquad\qquad\qquad\qquad\qquad\qquad\qquad\qquad\qquad\qquad\qquad\qquad\qquad\qquad(l_s\ge2),\\
 &Z(k_1,\dots,k_r; l_1,\dots,l_s; \alpha; x) \\
 &:=\frac{\Gamma(\alpha)}{\Gamma(\alpha-x)} 
  \sum_{0\le m_1<\cdots<m_r} 
  \sum_{0\le n_1<\cdots<n_s} 
  \frac{ (\alpha)_{m_1} }{ m_1! } \cdot
  \frac{ (\alpha)_{n_1} }{ n_1! } \cdot 
  \frac{ \Gamma(m_r+\alpha-x+1) \Gamma(n_s+\alpha-x+1)}{ \Gamma(m_r+n_s+2\alpha-x+1) }  \\
  &\qquad \qquad \qquad\qquad\qquad\qquad 
   \cdot \frac{1}{(m_1+\alpha)^{k_1-1}(m_1+\alpha-x) \cdots (m_r+\alpha)^{k_r-1}(m_r+\alpha-x) } \\ 
  &\qquad \qquad \qquad\qquad\qquad\qquad 
   \cdot \frac{1}{(n_1+\alpha)^{l_1-1}(n_1+\alpha-x) \cdots (n_s+\alpha)^{l_s-1}(n_s+\alpha-x) }.
\end{align*}
Note that $Z(k_1,\dots,k_r; \emptyset; \alpha; x)=Z(\emptyset; k'_1,\dots,k'_{r'}; \alpha; x)$ states the Ohno relation for PMZSs (Theorem \ref{main}) if the index $(k'_1,\dots,k'_{r'})$ is the dual index of an admissible index $(k_1,\dots,k_r)$. 

To prove Theorem \ref{main}, we need Lemmas \ref{lem1} and \ref{lem2}. 
\begin{lem} \label{lem1}
 For a non-negative integer $m$ and a complex number $\alpha$ with $\Re\alpha>0$, we have
 \begin{align*}
  \sum_{0\le n} 
  \frac{1}{n+\alpha-x} \cdot
  \frac{ (\alpha)_{n} }{ n! } \cdot
  \frac{ \Gamma(m+\alpha-x+1)\Gamma(n+\alpha-x+1) }{ \Gamma(m+n+2\alpha-x+1) }
  =\frac{\Gamma(\alpha-x)}{\Gamma(\alpha)} \cdot\frac{1}{m+\alpha}\cdot\frac{  m! }{ (\alpha)_m }. 
 \end{align*}
\end{lem}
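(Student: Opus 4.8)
The plan is to recognize the series on the left as a Gauss hypergeometric series evaluated at $1$. First I would absorb the factor $1/(n+\alpha-x)$ into the Gamma quotient by writing $\Gamma(n+\alpha-x+1)=(n+\alpha-x)\,\Gamma(n+\alpha-x)$, and rewrite $(\alpha)_n/n!=\Gamma(n+\alpha)/(\Gamma(\alpha)\,n!)$, so that after pulling the $n$-free factor to the front the summand collapses to
\[
 \frac{\Gamma(m+\alpha-x+1)}{\Gamma(\alpha)}\cdot\frac{\Gamma(n+\alpha)}{n!}\cdot\frac{\Gamma(n+\alpha-x)}{\Gamma(m+n+2\alpha-x+1)}.
\]
I would then compute the ratio of consecutive terms of the remaining $n$-series, obtaining
\[
 \frac{(n+\alpha)(n+\alpha-x)}{(n+1)(n+m+2\alpha-x+1)},
\]
which exhibits it as ${}_2F_1(\alpha,\alpha-x;\,m+2\alpha-x+1;\,1)$ scaled by its initial term $\Gamma(\alpha)\Gamma(\alpha-x)/\Gamma(m+2\alpha-x+1)$.

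Next I would apply Gauss's summation theorem
\[
 {}_2F_1(a,b;c;1)=\frac{\Gamma(c)\,\Gamma(c-a-b)}{\Gamma(c-a)\,\Gamma(c-b)},
\]
which is legitimate here because the parameters give $c-a-b=m+1$, hence $\Re(c-a-b)>0$ for every $m\ge0$. With $a=\alpha$, $b=\alpha-x$, $c=m+2\alpha-x+1$ one finds $c-a=m+\alpha-x+1$ and $c-b=m+\alpha+1$; the $\Gamma(m+2\alpha-x+1)$ from Gauss's formula cancels the one in the initial term, and after reinstating the prefactor $\Gamma(m+\alpha-x+1)/\Gamma(\alpha)$ both $\Gamma(m+\alpha-x+1)$ and $\Gamma(\alpha)$ cancel, leaving $\Gamma(\alpha-x)\,m!/\Gamma(m+\alpha+1)$. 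Finally I would verify this equals the right-hand side by substituting $(\alpha)_m=\Gamma(m+\alpha)/\Gamma(\alpha)$ and $1/(m+\alpha)=\Gamma(m+\alpha)/\Gamma(m+\alpha+1)$.

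The Gamma-function bookkeeping is routine; the one delicate point is the role of the formal parameter $x$. Since the general term decays like $n^{-(m+2)}$ as $n\to\infty$ (the $x$-dependence cancels in the growth exponent), the series converges absolutely for each fixed $m\ge0$ regardless of $x$, and both sides are meromorphic in $x$. I would therefore establish the identity for $x$ in a small real neighbourhood of $0$, where Gauss's theorem applies verbatim, and then extend it to the formal setting by analytic continuation (equivalently, by comparing the Taylor coefficients in $x$). As a fallback that sidesteps hypergeometrics, I would represent $\Gamma(n+\alpha-x)\,\Gamma(m+\alpha+1)/\Gamma(m+n+2\alpha-x+1)$ by the Beta integral $\int_0^1 t^{n+\alpha-x-1}(1-t)^{m+\alpha}\,dt$, interchange summation and integration, sum the binomial series $\sum_{n\ge0}\bigl((\alpha)_n/n!\bigr)t^n=(1-t)^{-\alpha}$, and evaluate the residual Beta integral $\int_0^1 t^{\alpha-x-1}(1-t)^m\,dt$; this yields the same closed form and makes the convergence considerations especially transparent.
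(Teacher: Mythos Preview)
Your proof is correct and follows essentially the same route as the paper: both recognize the left-hand side as a constant multiple of ${}_2F_1(\alpha,\alpha-x;m+2\alpha-x+1;1)$ and then invoke Gauss's summation theorem. The paper does this in two lines by inserting an auxiliary variable $y$ and identifying the hypergeometric series, whereas you reach the same identification via the term-ratio; your additional remarks on convergence in the formal parameter $x$ and the Beta-integral alternative are not in the paper but are sound embellishments rather than a different method.
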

\begin{proof}
 Since
 \begin{align*}
  &\sum_{0\le n} 
  \frac{1}{n+\alpha-x} \cdot
  \frac{ \Gamma(m+\alpha-x+1)\Gamma(n+\alpha-x+1) }{ \Gamma(m+n+2\alpha-x+1) } \cdot
  \frac{ (\alpha)_{n} }{ n! } y^{n} \\
  &=\frac{ \Gamma(m+\alpha-x+1)\Gamma(\alpha-x) }{ \Gamma(m+2\alpha-x+1) } 
  \cdot {_2F_1}(\alpha-x, \alpha, m+2\alpha-x+1; y), 
 \end{align*}
 and Gauss's hypergeometric theorem
 \begin{align*}
   {_2F_1}(a, b, c; 1)
   =\frac{ \Gamma(c) \Gamma(c-a-b) }{ \Gamma(c-a) \Gamma(c-b) },
 \end{align*} 
 we have the result.  
\end{proof}

\begin{lem} \label{lem2}
 For a positive integer $m$ and a complex number $\alpha$ with $\Re\alpha>0$, we have
\begin{align*}
 &\sum_{n_{s-1}<n_s} \frac{1}{n_s+\alpha-x} \cdot 
  \frac{ \Gamma(m_r+\alpha-x+1) \Gamma(n_s+\alpha-x+1) }{ \Gamma(m_r+n_s+2\alpha-x+1) } \\
 &=\frac{1}{m_r+\alpha} \cdot 
  \frac{\Gamma(m_{r}+\alpha-x+1) \Gamma(n_{s-1}+\alpha-x+1) }{ \Gamma(m_{r}+n_{s-1}+2\alpha-x+1) }.
\end{align*}
\end{lem}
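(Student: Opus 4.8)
The plan is to realize the summand on the left as a telescoping difference, so that summing over $n_s>n_{s-1}$ collapses to a single boundary term. To this end I would introduce, for each integer $n\ge n_{s-1}$, the auxiliary quantity
\[
 g(n):=\frac{1}{m_r+\alpha}\cdot\frac{\Gamma(m_r+\alpha-x+1)\,\Gamma(n+\alpha-x+1)}{\Gamma(m_r+n+2\alpha-x+1)},
\]
and aim to prove the one-step relation
\[
 g(n-1)-g(n)=\frac{1}{n+\alpha-x}\cdot\frac{\Gamma(m_r+\alpha-x+1)\,\Gamma(n+\alpha-x+1)}{\Gamma(m_r+n+2\alpha-x+1)},
\]
whose right-hand side is exactly the general term of the sum in the lemma with $n_s=n$.

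First I would establish this one-step relation by a direct computation. Rewriting $g(n-1)$ via the functional equation $\Gamma(z+1)=z\Gamma(z)$ applied to $\Gamma(n+\alpha-x+1)$ and $\Gamma(m_r+n+2\alpha-x+1)$, the difference $g(n-1)-g(n)$ factors as the Gamma-quotient above times $\bigl(\frac{m_r+n+2\alpha-x}{n+\alpha-x}-1\bigr)$. The whole identity then comes down to the elementary cancellation $(m_r+n+2\alpha-x)-(n+\alpha-x)=m_r+\alpha$, after which the prefactor $1/(m_r+\alpha)$ in $g$ cancels against this $m_r+\alpha$ and leaves precisely the summand. This is the only computational step and I expect it to be entirely routine.

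Finally I would sum the one-step relation over all integers $n>n_{s-1}$. The series telescopes, with partial sums $g(n_{s-1})-g(N)$, so the total equals $g(n_{s-1})-\lim_{N\to\infty}g(N)$; and $g(n_{s-1})$ is already the right-hand side of the lemma. It remains only to check that the boundary term vanishes, for which I would use $\Gamma(N+a)/\Gamma(N+b)\sim N^{a-b}$ as $N\to\infty$: with $a=\alpha-x+1$ and $b=m_r+2\alpha-x+1$ the parameter $x$ drops out of the exponent, giving $g(N)=O\bigl(N^{-(m_r+\alpha)}\bigr)\to0$ since $m_r\ge0$ and $\Re\alpha>0$. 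The main (and essentially only) obstacle is spotting the antidifference $g$; once one guesses that it should be a scalar multiple of the Gamma-quotient in the summand, the constant $1/(m_r+\alpha)$ is forced by the cancellation above. In contrast to the hypergeometric evaluation used for Lemma \ref{lem1}, this argument is purely elementary, and the formal parameter $x$ causes no difficulty since both sides are meromorphic in $x$ and coincide on the region of convergence.
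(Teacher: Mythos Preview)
Your proof is correct and follows essentially the same telescoping argument as the paper: both rewrite the summand as the difference $g(n_s-1)-g(n_s)$ via the functional equation of $\Gamma$, and then collapse the sum to the single boundary term $g(n_{s-1})$. Your version is slightly more explicit in checking that the far boundary term $g(N)$ vanishes as $N\to\infty$, which the paper leaves implicit.
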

\begin{proof}
Since
\[
 \Gamma(n_s+\alpha-x)=\frac{ \Gamma(n_s+\alpha-x+1) }{ n_s+\alpha-x }
 \quad\textrm{ and }\quad
 \frac{1}{ \Gamma(m_r+n_s+2\alpha-x) }=\frac{ m_r+n_s+2\alpha-x }{ \Gamma(m_r+n_s+2\alpha-x+1) },
\]
we have
\begin{align*}
 &\sum_{n_{s-1}<n_s} \frac{1}{n_s+\alpha-x} \cdot 
  \frac{ \Gamma(m_r+\alpha-x+1) \Gamma(n_s+\alpha-x+1) }{ \Gamma(m_r+n_s+2\alpha-x+1) } \\
 &=\frac{1}{m_r+\alpha} 
  \sum_{n_{s-1}<n_s}  
  \left(
   \frac{ \Gamma(m_r+\alpha-x+1) \Gamma(n_s+\alpha-x) }{ \Gamma(m_r+n_s+2\alpha-x) } 
   -\frac{ \Gamma(m_r+\alpha-x+1) \Gamma(n_s+\alpha-x+1) }{ \Gamma(m_r+n_s+2\alpha-x+1) } 
  \right) 
  \\
 &=\frac{1}{m_r+\alpha} \cdot 
  \frac{\Gamma(m_{r}+\alpha-x+1) \Gamma(n_{s-1}+\alpha-x+1) }{ \Gamma(m_{r}+n_{s-1}+2\alpha-x+1) }.
\end{align*}
This finishes the proof. 
\end{proof}

\begin{proof}[Proof of Theorem \ref{main}]
 By Lemma \ref{lem1}, we have
 \[
  Z(k_1,\dots,k_r; \emptyset; \alpha; x)
  =Z(k_1,\dots,k_r-1; 1; \alpha; x)
 \]
 for positive integers $k_1,\dots,k_r$ with $k_r\ge2$,
 and 
 \[
  Z(\emptyset; l_1,\dots,l_s; \alpha; x)
  =Z(1; l_1,\dots,l_s-1; \alpha; x)
 \]
 for positive integers $l_1,\dots,l_s$ with $l_s\ge2$. 
  By Lemma \ref{lem2}, we also have
 \[
  Z(k_1,\dots,k_r+1; l_1,\dots,l_s; \alpha; x)
  =Z(k_1,\dots,k_r; l_1,\dots,l_s,1; \alpha; x)
 \]
 and 
 \[
  Z(k_1,\dots,k_r,1; l_1,\dots,l_s; \alpha; x)
  =Z(k_1,\dots,k_r; l_1,\dots,l_s+1; \alpha; x)
 \]
 for positive integers $k_1,\dots,k_r, l_1,\dots,l_s$. 
 Thus we obtain the result. 
\end{proof}


\subsection{Proof of Proposition \ref{main2}}
For positive integers $k_1,\dots,k_r,l_1,\dots,l_s$ and a complex number $\alpha$ with $\Re\alpha>0$, let
\begin{align*}
 &\widetilde Z(k_1,\dots,k_r; \emptyset; \alpha; x) \\
 &:=\sum_{0\le m_1<\cdots<m_r} 
  \frac{ \Gamma(m_r+\alpha-x+1)  }{ \Gamma(m_r+2\alpha-x) }\cdot
  \frac{1}{(m_1+\alpha)^{k_1-1}(m_1+\alpha-x) \cdots (m_r+\alpha)^{k_r-1}(m_r+\alpha-x) } \\
   &\qquad\qquad\qquad\qquad\qquad\qquad\qquad\qquad\qquad\qquad\qquad\qquad\qquad\qquad\qquad\qquad\qquad(k_r\ge2),\\
 &\widetilde Z(\emptyset; l_1,\dots,l_s; \alpha; x) \\
 &:=\sum_{0\le n_1<\cdots<n_s} 
  \frac{ \Gamma(n_s+\alpha-x+1)  }{ \Gamma(n_s+2\alpha-x) }\cdot
  \frac{1}{(n_1+\alpha)^{l_1-1}(n_1+\alpha-x) \cdots (n_s+\alpha)^{l_s-1}(n_s+\alpha-x) } \\
   &\qquad\qquad\qquad\qquad\qquad\qquad\qquad\qquad\qquad\qquad\qquad\qquad\qquad\qquad\qquad\qquad\qquad(l_s\ge2),\\
 &\widetilde Z(k_1,\dots,k_r; l_1,\dots,l_s; \alpha; x) \\
 &:=
  \frac{1}{\Gamma(\alpha-x)} \sum_{0\le m_1<\cdots<m_r} 
  \sum_{0\le n_1<\cdots<n_s} 
  \frac{ \Gamma(m_r+\alpha-x+1) \Gamma(n_s+\alpha-x+1)}{ \Gamma(m_r+n_s+2\alpha-x+1) }  \\
  &\qquad \qquad \qquad\qquad\qquad\qquad 
   \cdot \frac{1}{(m_1+\alpha)^{k_1-1}(m_1+\alpha-x) \cdots (m_r+\alpha)^{k_r-1}(m_r+\alpha-x) } \\ 
  &\qquad \qquad \qquad\qquad\qquad\qquad 
   \cdot \frac{1}{(n_1+\alpha)^{l_1-1}(n_1+\alpha-x) \cdots (n_s+\alpha)^{l_s-1}(n_s+\alpha-x) }.
\end{align*}

Similar to the proof of Theorem \ref{main}, by applying Lemma \ref{lem2} for all steps, we have 
\begin{align}\label{sec3}
\widetilde Z(k_1,\dots,k_r; \emptyset; \alpha; x)=\widetilde Z(\emptyset; k'_1,\dots,k'_{r'}; \alpha; x),
\end{align}
 where the index $(k'_1,\dots,k'_{r'})$ is the dual index of an admissible index $(k_1,\dots,k_r)$. Since $\widetilde Z(k_1,\dots,k_r; \emptyset; \alpha; 0)=\widetilde\zeta(k_{1},\dots,k_{r};\alpha)$, we have the result.

\section*{Acknowledgement}
This work was supported by JSPS KAKENHI Grant Number JP19K14511.


\end{document}